\theoremstyle{plain}
\newtheorem{The}{Theorem}[section]
\newtheorem*{The*}{Theorem}
\newtheorem{Pro}[The]{Proposition}
\newtheorem{Lem}[The]{Lemma}
\newtheorem*{Cor*}{Corollary}
\theoremstyle{definition}
\newtheorem*{Def}{Definition}
\theoremstyle{remark} 
\newtheorem{Rem}[The]{Remark}
\newtheorem{Exa}{Example}
\newtheorem*{Rem*}{Remark}
\numberwithin{equation}{section}
\DeclareMathOperator{\End}{End}
\DeclareMathOperator{\Hom}{Hom}
\DeclareMathOperator{\GL}{GL}
\DeclareMathOperator{\Span}{Span}
\DeclareMathOperator{\Id}{Id}
\DeclareMathOperator{\mH}{\mathcal H}
\DeclareMathOperator{\mV}{\mathcal V}
\DeclareMathOperator{\mJ}{\mathcal J}
\DeclareMathOperator{\ed}{d}
\renewcommand{\Im}{\operatorname{Im}}
\renewcommand{\Re}{\operatorname{Re}}
\newcommand{\R}{\mathbb{R}}
\newcommand{\C}{\mathbb{C}}
\renewcommand{\H}{\mathbb{H}}  
\newcommand{\CP}{\mathbb{CP}}
\newcommand{\HP}{\mathbb{HP}}
\begin{document}

\title[Conformal Fibrations of $S^3$ by Circles]{Conformal Fibrations of $S^3$ by Circles}

\author{Sebastian Heller}

\address{Sebastian Heller\\
  Institut f\"ur Mathematik\\
  Universit{\"a}t T{\"u}bingen\\\
  Germany}

\email{heller@mathematik.uni-tuebingen.de}

\subjclass{35Q55,53C42,53A30}

\date{\today}

\thanks{Author supported by GRK 870 ''Arithmetic and Geometry'' and SFB/Transregio 71}

\begin{abstract} 
 It is shown that analytic conformal submersions of $S^3$ are given by intersections of (not necessary closed) 
complex surfaces with a quadratic real hyper-surface in $\CP^3.$
 A new description of the space of circles in the 3-sphere in terms of a natural bilinear form on the tangent sphere bundle of $S^3$ is given. As an application it is shown that every conformal fibration of $S^3$ by circles is the Hopf fibration up to conformal transformations.
 \end{abstract}

\maketitle


\section{Introduction}
\label{sec:intro}

We consider conformal submersions $\pi\colon S^3\to S^2.$ They give rise to a CR structure defined by the cross product with the oriented unit length vector field $T$ tangent to the fibers. 
The tangent sphere bundle $SP$ of a 3-manifold $P$ inherits a natural CR structure by the work of LeBrun \cite{LeB}.
It turns out (see Proposition \ref{CRPsi}) that the map $T\colon S^3\to SS^3$ is CR holomorphic.
The tangent sphere bundle $SS^3$ can be identified
with a real quadratic CR hyper-surface $Q$ of $\CP^3,$ see Proposition \ref{ss3=Q}. 
From this observation it follows that
 analytic conformal submersions of $S^3$ are locally given by
the intersection
of a complex surface $A$ with $Q\subset \CP^3.$

In the second part of this paper we consider conformal submersions whose fibers are circles. Using the CR theory of the first part we give another proof of the fact that circles in $S^3$ correspond to null-lines in $\CP^3.$ Equivalently, the space of oriented circles is an open dense subset of the $3-$quadric $\mathcal Q\subset\CP^4.$ The degenerated circles, i.e. points, correspond to the boundary in $\mathcal Q.$ We apply this to give an invariant proof
of the fact that a conformal submersion $S^3\to\CP^1$ gives rise to a holomorphic curve $\gamma\colon\CP^1\to\mathcal Q.$ We show that $\gamma$ has degree $1,$ 
and deduce that any conformal submersion of $S^3$ by circles is the 
Hopf fibration up to conformal transformations.

This paper is part of the authors PhD thesis \cite{H}. The author would like to thank his thesis advisor Ulrich Pinkall
\section{CR manifolds}
We start with a short introduction to CR (Cauchy-Riemann) manifolds. We will use a natural CR structure on the tangent sphere bundle of $3-$manifolds to describe
conformal submersions.

\begin{Def}
A CR structure (of hyper-surface type) on a manifold $M$ of dimension $2n-1$ is a complex sub-bundle $$T^{(1,0)}\subset TM\otimes\C$$ of complex dimension $n-1$ which is 
(formally) integrable and satisfies
$$T^{(1,0)}\cap\overline{T^{(1,0)}}=\{0\}.$$
\end{Def} 
The basic example is given by
hyper-surfaces $M\to E$ of complex manifolds. In that case $T^{(1,0)}:=TM\otimes\C\cap T^{(1,0)}E$ and the conditions are satisfied automatically.
 
As in the case of complex manifolds, there is another way to define a CR manifold. Consider the $2n-2$ dimensional bundle
$$H:=\Re\{T^{(1,0)}M\oplus T^{(0,1)}M\}\subset TM$$
with complex structure 
$\mJ\colon H\to H; \mJ(v+\bar v)=i(v-\bar v).$ It is called the Levi distribution of the CR manifold.
The integrability of
$T^{(1,0)}M$ can be written as
\[[\mJ X,Y]+[X,\mJ Y]\in\Gamma(H)\]
and
\[[X,Y]-[\mJ X,\mJ Y]+\mJ[\mJ X,Y]+\mJ[X,\mJ Y]=0.\]
The Levi form $\mathcal L$ is given by the curvature of the
Levi distribution $H:$
$$\mathcal L \colon H\times H\to TM/ H; (X,Y)\mapsto [X,Y]\mod H.$$
This is a skew-symmetric complex bilinear form, and, in the case of oriented $M,$ its index is an invariant of the CR structure. A CR structure is called strictly pseudo-convex if the index is 
extremal, i.e. $\pm(n-1),$ at any point.

A function $f\colon M\to\C$ is called CR function or CR holomorphic
if
$\ed f(\mJ X)=i\ed f(X)$
for all $X\in H.$ CR holomorphic maps between CR manifolds or between CR manifolds and complex manifolds are defined similarly.

\subsection{The tangent sphere bundle}
We now describe the canonical CR structure of the tangent sphere bundle of an oriented conformal $3-$space $(M,[g]).$ This is due to LeBrun \cite{LeB}.
Note that the CR structure here differs from the natural CR structure on the tangent sphere bundle of a Riemannian manifold of arbitrary dimension.
\begin{Def}
The tangent sphere bundle $p\colon SM\to M$ of an oriented conformal $3-$space $(M,[g])$ is the bundle of oriented tangent lines.
\end{Def}
For a metric $g\in[g]$ in the conformal class of $M,$ the unit sphere bundle of $(M,g)$ is
$S_gM:=\{v\in TM\mid g(v,v)=1\}.$ Using $g$ this bundle can be identified with the tangent sphere bundle.
We denote the elements of the tangent sphere bundle by $[v]\in SM.$ Any two representatives $v$ and $\tilde v$ of $[v]\in SM$ are 
positive multiples of each other.

The Levi distribution of the tangent sphere bundle is defined by
$$H_{[v]}:=\{X\in T_{[v]}SM\mid \ed_{[v]} p(X)\perp v\}.$$
At $[v]\in SM,$ the complex structure $\mJ$ on $H$ is
given as follows:
Let $g\in[g]$ be a metric in the conformal class and let $v\in T_pM$ be a vector of length 
$1$ with respect to $g.$ Consider the subspace $V:=v^\perp\subset T_pM.$ 
The tangent space of $T_pM$ can be canonically identified
with $T_pM.$ By using $g$ one obtains an exact sequence
$$0\to V\to H_{[v]}\subset T_{[v]}SM\to V\subset T_pM\to0.$$ On $V$ there is
the canonical complex structure given by the cross-product 
$\mJ(w)=v\times w$ with respect to the orientation and $g.$ Then, there is a unique complex structure $\mJ$ on $H_{[v]}$ making the above sequence complex linear. Of course, this definition does not depend on the chosen metric $g\in[g].$ The CR structure $\mJ$ of the tangent sphere bundle satisfies the integrability conditions, see \cite{LeB}.

\subsection{The tangent sphere bundle of $S^3$}\label{tss3}
We consider the tangent sphere bundle of the conformally flat $3-$sphere in details.
We show that its tangent sphere bundle is a CR hyper-surface of $\CP^3.$

We need to recall some facts of the twistor projection 
$\pi\colon\CP^3\to S^4,$ first. In general the twistor space $P$ of a four dimensional Riemannian manifold $M$ is the bundle of almost complex structures compatible with the metric. The total space of the twistor
projection inherits a canonical complex structure as follows: The fibers are round $2-$spheres, as for a fixed vector $N$ of length $1$ any almost complex Hermitian structure $\mJ$ is
determined by the unit vector $\mJ(N)$ of length $1$ in 
$N^\perp.$ Although the identification with a $2$-sphere is not canonical but rather depends on
the choice of $N,$ the induced complex structure on the vertical space $\mV$ is well-defined. Using the embedding of the space of complex Hermitian structures $P$ into
$\End(TM)$ and the connection induced by the Levi-Civita
connection on $\End(TM),$ one obtains the horizontal bundle 
as the tangents to parallel curves of complex structures.
 This horizontal space 
projects isomorphically onto the tangent space of $M.$
The complex structure $\mJ$ lifts to the horizontal space 
$\mH_{\mJ}$ at $\mJ.$ Combined with the complex structure on $\mV$ it defines the canonical almost complex structure on the twistor space.

There is a nice description in the case of the round sphere $S^4.$ Consider $\H^2$ 
as a right vector space. Via the complex structure given by right multiplication
with $i$ we can identify $\H^2\cong \C^4.$ The twistor projection is given by
$$\pi\colon\CP^3\to S^4\cong\HP^1; e\in\CP^3\mapsto l=e\H\in\HP^1,$$
which maps complex lines $e$ to quaternionic lines
$e\H=e\oplus ej.$ 

The round  sphere
$S^3\subset\HP^1$ is the space of isotropic lines of the indefinite Hermitian metric $\langle,\rangle$ given by
$\langle v,w\rangle=\bar v_1w_1-\bar v_2w_2.$ Then 
the real quadric $Q:=\pi^{-1}(S^3)\subset\CP^3$ is the set of isotropic lines of 
$$(,)\colon\C^4\times\C^4\to\C; (z,w)=\bar z_1w_1+\bar z_2w_2-\bar z_3w_3-\bar z_4 w_4$$ in $\C^4.$ 
Evidently that is a $CR$ hyper-surface of $\CP^3.$ Let
$N$ be the oriented unit normal vector field of $S^3\to S^4\cong\HP^1.$
There is a one-to-one correspondence between almost complex Hermitian structures along $S^3\subset\HP^1$  and elements of the unit sphere bundle via 
\begin{equation}\label{jt}
\mJ\in\End(T_l\HP^1)\mapsto [\mJ(N)]\in S_lS^3.
\end{equation}
\begin{Pro}\label{ss3=Q}
The CR structure of the tangent sphere bundle of $S^3$ is given by the (restricted) twistor fibration $Q\subset\CP^3\to S^3\subset S^4$
via the map \eqref{jt}.
\end{Pro}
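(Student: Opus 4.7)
The plan is to show that the map $\Psi\colon SS^3 \to Q$ coming from \eqref{jt}, sending $[v]\in SS^3$ to the unique $g$-orthogonal complex structure $\mJ$ on $T_l\HP^1$ with $\mJ(N)=v$, is an isomorphism of CR manifolds. First I would check that $\Psi$ is a diffeomorphism covering $\mathrm{id}_{S^3}$: for each $l\in S^3\subset\HP^1$, the twistor fiber $\pi^{-1}(l)\cong\CP^1$ parametrizes orthogonal complex structures on $T_l\HP^1$, and $\mJ\mapsto \mJ(N)$ identifies this $\CP^1$ with the unit sphere in $N^\perp = T_l S^3$.

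Next I would match the Levi distributions. Decomposing $X\in T_\mJ\CP^3$ as $X=X_v+X_h$ via the horizontal--vertical splitting from the Levi-Civita connection, one has $X\in T_\mJ Q$ iff $\ed\pi(X)\perp N$. The complex structure $J$ on $\CP^3$ preserves $\mV$ and $\mH$ and acts on $\mH_\mJ$ as $\mJ$ via $\ed\pi$, so the additional condition $JX\in T_\mJ Q$ reads $\mJ(\ed\pi(X_h))\perp N$, equivalently $\ed\pi(X_h)\perp \mJ^{-1}(N) = -v$. Hence the Levi distribution at $\mJ$ is $\{X : \ed\pi(X)\in v^\perp\cap T_l S^3\}$, which is precisely LeBrun's Levi distribution at $[v]$.

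To compare the complex structures on $H_\mJ = \mV_\mJ \oplus (\mH_\mJ\cap H_\mJ)$: on the horizontal piece the structure is $\mJ$ restricted to $v^\perp\cap T_lS^3\subset T_l\HP^1$, while on the vertical piece (tangent to the $\CP^1$-fiber at $\mJ$), differentiating \eqref{jt} identifies $\mV_\mJ$ with $T_vS^2 = v^\perp\subset T_lS^3$. LeBrun's complex structure on both pieces is cross-product with $v$ in $T_lS^3$, so matching the two reduces to verifying that $\mJ|_{v^\perp\cap T_lS^3}$ equals cross-product with $v$, together with the analogous vertical statement about the $\CP^1$-fiber.

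The main obstacle is the orientation bookkeeping in this last step. To bypass it cleanly, I would use invariance: both CR structures are preserved by the conformal group of $S^3$, which lifts to the subgroup of $\mathrm{PGL}(2,\H)$ preserving $Q$ and acts biholomorphically on $\CP^3$. Since this group acts transitively on $SS^3$, matching the structures at one well-chosen point suffices; at such a point (say $l=[1:1]\in\HP^1$ with a concrete unit $v\in T_l S^3$) the identifications follow from a direct computation in quaternionic coordinates.
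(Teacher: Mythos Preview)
Your proposal is correct but follows a different route from the paper. Both arguments match the Levi distributions and the vertical complex structures essentially as you describe; the divergence is in the horizontal part. The paper shows directly that the twistor horizontal bundle of $Q$ (coming from the Levi--Civita connection of $S^4$) coincides under \eqref{jt} with the horizontal bundle of $SS^3$ for the round metric on $S^3$: this follows from $\nabla_X(\mJ(N))=(\nabla_X\mJ)(N)+\mJ(\nabla_XN)$ together with the parallelism of $N$ along the totally geodesic $S^3\subset S^4$, after which both horizontal complex structures are visibly $\mJ(N)\times\cdot$. You instead sidestep the question of whether the two horizontal bundles agree by invoking transitivity of the conformal group on $SS^3$ and reducing to a single point. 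This is legitimate; just be aware that your ``piece by piece'' paragraph tacitly presumes the horizontal bundles match, so either invoke the uniqueness clause in LeBrun's exact-sequence definition (the complex structure on $H_{[v]}$ is determined by its effect on the vertical sub and the horizontal quotient, with no splitting needed) or let the invariance argument carry the full weight. The paper's approach also yields the explicit description \eqref{emu} of the map, which is used later; your approach trades this for avoiding any connection computation.
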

\begin{proof}
First, we need a better understanding of the map in \eqref{jt}. Fix the embedding
$x\in S^3\subset\H\mapsto[x:1]\in\HP^1.$ Using $\langle,\rangle$ we identify
\[T_{[x:1]}\HP^1\cong\Hom_\H([x:1];\H^2/[x:1])=\Hom([x:1],[x:1]^\perp)\]
For the oriented unit normal vector $N_x$ at $x\in S^3\to\HP^1$ we obtain the 
quaternionic homomorphism which maps $(x,1)$ to $\frac{1}{2}(x,-1).$ 
The tangent bundle of $S^3$ is given by $TS^3\cong S^3\times\Im\H$ via left
 translation. The tangent vector $\mu\in\Im\H$ at $x\in S^3$ corresponds to
quaternionic linear mapping which assigns $\frac{1}{2}(x,-1)\mu$ to $(x,1).$ 
Therefore, the almost complex hermitian structure of 
$T_{[x:1]}\HP^1\cong\Hom([x:1],[x:1]^\perp),$ which maps
$N$ to the vector given by $\mu\in\Im\H,$ is the complex structure 
$\mJ\in\End([x:1])$ with
$\mJ(x,1)=(x,1)\mu$ via pre-composition. The complex line $e$ corresponding
to the tangential vector $\mu$ at $x$ is determined to be
\begin{equation}\label{emu}
e=\{(x,1)\lambda\mid \mJ(x,1)\lambda=(x,1)\lambda i\}
=\{(x,1)\lambda\mid \mu\lambda=\lambda i\}.
\end{equation}
It remains to show that the mapping in \eqref{jt} is $CR$ holomorphic. Note that the vertical spaces of
both bundles are contained in the respective Levi distributions. Moreover, the complex structures restricted to them are clearly the same by the description of the complex structure on the vertical
space of the twistor space. The intersection of the Levi
distribution and the horizontal space of $Q$ is given by 
$\mJ(\hat N)^\perp,$
where $\hat N$ is the (horizontal lift of the) oriented unit normal field of $S^3\to\HP^1.$ The almost complex structure on this space is given by
$\mJ(v)=\mJ(N)\times v.$ Therefore it remains to show that the horizontal bundles of $SS^3$ and $Q$ are identical
via \eqref{jt}, where the horizontal bundle
of $SS^3$ is given by the Levi-Civita connection of the round
metric of $S^3.$
This is easily deduced from the definition together with 
\[\nabla_X(\mJ(N))=(\nabla_X\mJ)(N)+\mJ(\nabla_XN)\]
and the fact that the unit normal field $N$ of the totally geodesic $S^3\subset S^4$ is parallel.
\end{proof}

\section{Conformal Submersions}
Conformal submersions are generalizations of Riemannian submersions as conformal immersions to isometric ones:
\begin{Def}
A submersion $\pi\colon P\to M$ between Riemannian manifolds 
$(P,g)$ and $(M,h)$ is called
conformal if for each $p\in P$ the restriction of the differential to the complement of its kernel
$$\ed_p\pi\colon\ker\ed_p\pi^\perp\to T_{\pi(p)}M$$
is conformal.
\end{Def}
Of course, the definition does not depend on the representative of a conformal class. The Hopf fibration is a basic example. We call
$\mV=\ker\ed\pi$ the vertical space and its orthogonal complement 
$\mH=\ker\ed\pi^\perp$ the horizontal space. It is easy to prove the following Proposition:
\begin{Pro}\label{cofo}
A submersion between Riemannian manifolds is conformal if and only if
the differential at a point, restricted to the horizontal space, is conformal and
the Lie derivative of any vertical field on the metric restricted to the horizontal space is conformal, that means
for all $V\in\Gamma(\ker\ed\pi)$ and $X,Y\in\ker\ed\pi^\perp$ 
$$\mathcal L_Vg(X,Y)=v(V)g(X,Y),$$
where $v(V)$ does only depend on $V.$
\end{Pro}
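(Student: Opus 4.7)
The plan is to treat the two implications separately. The backward direction — that condition (1) together with (2) implies conformality — is essentially immediate, since (1) is precisely the definition of a conformal submersion; condition (2) is not needed here. All the real content therefore lies in the forward direction, which amounts to showing that a conformal submersion automatically satisfies the Lie-derivative identity.

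For the forward direction I would let $\lambda:P\to\R_{>0}$ be the pointwise conformal factor supplied by the definition, so that $g(X,Y)=\lambda^{2}\,h(\ed\pi\,X,\ed\pi\,Y)$ whenever $X,Y$ are horizontal. Since $\mathcal{L}_V g$ is a $(0,2)$-tensor, $(\mathcal{L}_V g)(X,Y)$ is determined pointwise by $X_p,Y_p$, and I may therefore extend $X,Y$ to basic horizontal fields — i.e.\ horizontal lifts of vector fields $\tilde X,\tilde Y$ on $M$ — without loss of generality.

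The crucial step is the standard observation that $[V,X]$ is vertical whenever $V$ is vertical and $X$ is basic: from $\ed\pi\,V=0$ and $\ed\pi\,X=\tilde X\circ\pi$ one gets $\ed\pi[V,X]=[0,\tilde X]=0$. Combined with the $g$-orthogonality of vertical and horizontal vectors, this kills the bracket terms in $(\mathcal{L}_V g)(X,Y)=V(g(X,Y))-g([V,X],Y)-g(X,[V,Y])$, leaving
\[
(\mathcal{L}_V g)(X,Y)\;=\;V\bigl(\lambda^{2}\,h(\tilde X,\tilde Y)\circ\pi\bigr)\;=\;V(\lambda^{2})\cdot h(\tilde X,\tilde Y)\circ\pi\;=\;2V(\log\lambda)\,g(X,Y),
\]
since $h(\tilde X,\tilde Y)\circ\pi$ is constant along fibres. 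This establishes (2) with $v(V)=2V(\log\lambda)$, which manifestly depends only on $V$.

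No serious obstacle arises; the whole argument rests on the two elementary facts that $\ed\pi[V,X]=0$ for vertical $V$ and basic $X$, and that vertical and horizontal subspaces are $g$-orthogonal. Everything else is tensorial bookkeeping.
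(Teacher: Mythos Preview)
The paper gives no proof of this proposition --- it only says ``It is easy to prove'' --- so there is nothing to compare your argument against directly.

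Your forward argument is correct and is the standard one: extend $X,Y$ to basic (projectable) horizontal fields, use $\pi$-relatedness to see that $[V,X]$ is vertical, and differentiate $\lambda^{2}\,h(\tilde X,\tilde Y)\circ\pi$ along $V$ to obtain $(\mathcal L_V g)(X,Y)=2V(\log\lambda)\,g(X,Y)$.

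One caveat on the backward direction. You read condition (1), ``the differential at a point \dots\ is conformal'', as holding at \emph{every} point, which makes it coincide with the definition and renders that implication vacuous, with condition (2) playing no role. The phrasing is admittedly ambiguous, but under that reading the proposition has no content as an ``if and only if''. The reading more consistent with how the result is invoked in Proposition~\ref{CRPsi} --- where only the Lie-derivative condition is checked and the conformal structure on the target is to be \emph{found} --- is that (1) asks for conformality at \emph{one} point of each fibre. Under that reading the backward direction has genuine content: condition (2) says the local flow of any vertical field rescales $g|_{\mH\times\mH}$ by a scalar, hence carries conformality of $\ed\pi|_{\mH}$ from one point of a connected fibre to all of it. This step is still elementary, but your dismissal of the backward direction as trivial skips it.
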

\subsection{The CR structure of a fibered conformal $3-$manifold}
Let $\pi\colon P\to M$ a submersion of a conformal oriented $3-$space to an oriented surface. We define a CR structure on $P$ as follows: We set 
$H:=\mH=\ker\ed\pi^\perp$ and define $\mJ\colon H\to H$ to be the 
rotation by $\frac{\pi}{2}$ in positive direction. We extend $\mJ$ to
$TP$ by
setting $\mJ(T)=0$ for $T\in\ker\ed\pi$ and call $\mJ\in\End(TP)$
the CR structure on $P.$ Note that $\mJ$ can be computed as
follows: Fix a metric in the conformal class and the unit length
vector field $T$ in positive fiber direction, then $\mJ(X)=T\times X.$\\

There is the following characterization of conformal submersions of 3-manifolds due to Pinkall. A similar
approach was used by \cite{B-W}.
\begin{Pro}\label{CRPsi}
A submersion $\pi\colon P\to M$ between an oriented conformal 
$3-$space to an oriented surface
is conformal for a suitable complex structure on $M$ if and only if
the map $\Psi,$ through which each point in P is assigned to the oriented fiber direction
$[T]\in SP,$ is CR holomorphic.
\end{Pro}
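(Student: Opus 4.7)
Fix a representative $g\in[g],$ let $T$ denote the unit, positively oriented vertical vector field, and $\nabla$ the associated Levi-Civita connection. The plan is to show that both the CR holomorphicity of $\Psi$ and the existence of a complex structure on $M$ making $\pi$ conformal are equivalent to the single algebraic condition that the shape-type operator
\[
S\colon\mH\to\mH,\qquad S(X):=\nabla_X T
\]
commutes with $\mJ|_{\mH}=T\times\cdot\,.$

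\textbf{CR side.} Writing $\Psi$ as the section $p\mapsto T_p$ of $S_g P\to P,$ the canonical decomposition of $T_{[v]}(SP)$ from \S\ref{tss3} into a Levi-Civita horizontal part and the sphere-vertical part (both identified with $v^\perp\subset T_{p(v)}P$) gives
\[
d\Psi_p(X)=(X,\,\nabla_X T).
\]
For $X\in\mH_p,$ the projection to the base equals $X\perp T_p,$ so $d\Psi(X)$ lies in the Levi distribution $H_{[T_p]}^{SP},$ whose complex structure acts as $T_p\times\cdot$ on each summand. Consequently
\[
d\Psi(\mJ X)-\mJ^{SP}\,d\Psi(X)=\bigl(0,\;\nabla_{T\times X}T-T\times\nabla_X T\bigr),
\]
so CR holomorphicity is precisely $S\mJ=\mJ S$ on $\mH.$

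\textbf{Conformal side.} By Proposition \ref{cofo}, conformality of $\pi$ amounts to (1) $d\pi|_{\mH}$ being conformal pointwise and (2) $\mathcal{L}_V g|_{\mH}\in C^\infty(P)\cdot g|_{\mH}$ for every vertical $V.$ A short computation using metric compatibility and torsion-freeness of $\nabla$ yields
\[
(\mathcal{L}_T g)(X,Y)=g(SX,Y)+g(X,SY),\qquad X,Y\in\mH,
\]
and $V=fT$ reduces to $V=T$ on horizontal arguments since $g(T,X)=0.$ Hence (2) says the symmetric part of $S$ is a scalar multiple of the identity on $(\mH,g);$ on the oriented real 2-plane $(\mH,\mJ)$ this is in turn equivalent to $S\mJ=\mJ S$ (both characterise the endomorphisms of the form $a\,\Id+b\mJ$). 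Given this, define $J^M$ on $M$ by $J^M\,d\pi X:=d\pi(\mJ X);$ its independence of the point in the fiber is verified by the bracket identity
\[
[T,\mJ\tilde Y]-\mJ[T,\tilde Y]=(\nabla_T T)\times\tilde Y+(\mJ S-S\mJ)(\tilde Y),
\]
whose right-hand side is vertical, since $\nabla_T T,\tilde Y\in\mH$ forces their cross-product to be parallel to $T,$ while the second summand vanishes by hypothesis. Any $J^M$-compatible metric on $M$ then makes $d\pi|_{\mH}$ complex linear between 2-planes with compatible complex structures, hence conformal, establishing (1).

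\textbf{Main obstacle.} The delicate point is correctly identifying $d\Psi$ in the Levi-Civita splitting of $T(SP)$ so that CR holomorphicity collapses to the commutator relation $S\mJ=\mJ S.$ Once this is in place, both directions of the proposition follow from the same bracket computation combined with the two-dimensional linear-algebra observation that the symmetric-part-scalar endomorphisms of $(\mH,\mJ)$ coincide with the complex-linear ones.
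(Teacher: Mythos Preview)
Your proof is correct and follows essentially the same route as the paper: both sides are reduced to the single condition $\nabla_{\mJ X}T=\mJ\nabla_X T$ (your $S\mJ=\mJ S$), obtained by splitting $\ed\Psi$ into horizontal and vertical parts via the Levi--Civita connection. You have filled in the two-dimensional linear-algebra equivalence and the explicit well-definedness of $J^M$ that the paper leaves implicit when it says the commutator condition is ``clearly equivalent'' to Proposition~\ref{cofo}.
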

\begin{proof}
Fix a Riemannian metric $g\in[g]$ in the given conformal class on $P.$ By definition of the
CR structure of $SP,$ $\ed\Psi$ maps the Levi distribution of $M$
into that of $SP.$ Moreover it is obvious that the composition of the
projection onto the horizontal part of the Levi distribution, which
depends on the choice of $g,$ and $\ed\Psi$ commute with the
$\mJ'$s. But the vertical part of
$\ed\Psi,$ which again is depending on $g\in[g],$ is given by $\nabla T,$ where $T$ is the unique vector field in positive fiber direction of length $1.$ It remains to show that $\pi$ is conformal if and only if 
$$\mJ\nabla_XT=T\times\nabla_XT=\nabla_{T\times X}T
=\nabla_{\mJ X}T$$ for all $X\perp T.$ But this equation is 
clearly equivalent to the characterization of Proposition \ref{cofo}.
\end{proof}
In the case of $P=S^3$ we have seen that $SS^3$ is
a CR hypersurface of $Q\to\CP^3,$ hence $\Psi$ is a CR immersion of 
$S^3$ with the induced CR structure into complex projective 
space. 
\begin{Pro}\label{analytic_submersion}
Let $\pi\colon S^3\to S^2$ be an analytic conformal submersion. Then
the image of the oriented fiber tangent map $\Psi$ is the intersection of a complex surface $A\subset\CP^3$ and $SS^3\cong Q\subset\CP^3.$
\end{Pro}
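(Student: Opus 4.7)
The plan is to construct $A$ locally near each point of $S^3$ by analytically extending $\Psi$ along the real transverse direction of the CR structure of $S^3$.

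Since $\Psi$ is a section of the bundle $SS^3\to S^3$, it is an immersion; by Proposition \ref{CRPsi} it is also CR holomorphic, and by hypothesis real analytic. Fix $p\in S^3$ and choose real analytic CR coordinates $(z,t)\in\C\times\R$ on a neighborhood of $p$ with $\partial/\partial\bar z$ spanning the $(0,1)$-subspace of the CR structure of $S^3$, together with local holomorphic coordinates $(\zeta_1,\zeta_2,\zeta_3)$ on $\CP^3$ near $\Psi(p)$. The CR condition forces $\partial\Psi_j/\partial\bar z=0$ for $\Psi_j:=\zeta_j\circ\Psi$, so $\Psi_j=\Psi_j(z,t)$ is holomorphic in $z$ for fixed $t$ and real analytic in $t$.

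Expanding $\Psi_j$ as a convergent Taylor series around $(0,t_0)$, its coefficients viewed as a power series in $t-t_0$ are holomorphic in $z$. Substituting a complex parameter $w$ for $t$ thus yields a jointly holomorphic extension $\tilde\Psi_j(z,w)$ on a complex neighborhood $U\subset\C^2$ of the real slice $\{\Im w=0\}$. The differential of the resulting holomorphic map $\tilde\Psi\colon U\to\CP^3$ at real points equals the $\C$-linear extension of the injective $\ed\Psi$, so after shrinking $U$ the image $A:=\tilde\Psi(U)$ is a complex surface in $\CP^3$ containing $\Psi(S^3)$ in a neighborhood of $\Psi(p)$.

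It remains to show $A\cap Q=\Psi(S^3)$ locally. One inclusion is immediate since $\Psi(S^3)\subset Q$. For the other, $A$ cannot be contained in $Q$: if it were, then $T_qA$ would equal the maximal complex subspace $H_qQ$ of $T_qQ$ at every $q\in A$, making $A$ an integral manifold of the Levi distribution $H_Q$, contradicting the non-vanishing of the Levi form of the quadric $Q$. Hence $A\cap Q$ is a proper real analytic subset of $A$ of real dimension at most $3$; combined with the $3$-dimensional submanifold $\Psi(S^3)\subset A\cap Q$, this forces $A\cap Q=\Psi(S^3)$ locally. The principal technical step is the joint holomorphic extension in the second paragraph, which follows from the uniqueness of holomorphic extensions of real analytic functions combined with the CR-holomorphy in $z$.
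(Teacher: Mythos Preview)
Your overall strategy—extend the CR map $\Psi$ holomorphically into a complex $2$-dimensional thickening of $S^3$ and take the image—is exactly the paper's. However, the implementation contains a genuine error.

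You claim one can choose real analytic coordinates $(z,t)\in\C\times\R$ near $p$ with $\partial/\partial\bar z$ spanning $T^{0,1}$. Such coordinates would force the Levi distribution to be $\{dt=0\}$, hence integrable, i.e.\ the CR structure would be Levi-flat. But the CR structure induced on $S^3$ by a conformal submersion $\pi\colon S^3\to S^2$ has Levi distribution $\mH=\ker\ed\pi^\perp$, and this horizontal distribution is \emph{not} integrable (an integrable $\mH$ would yield a foliation of $S^3$ by surfaces each covering the simply connected $S^2$, hence compact, which is impossible). In the correct local picture one only has $T^{0,1}$ spanned by $\partial_{\bar z}+a\,\partial_t$ with $a\not\equiv 0$, so the CR equation reads $\partial_{\bar z}\Psi_j+a\,\partial_t\Psi_j=0$, and your ``holomorphic in $z$, then complexify $t$'' step breaks down.

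The paper repairs this by invoking the real analytic CR embedding theorem (cited from \cite{Bo}): the analytic CR $3$-manifold $(S^3,\mJ)$ embeds locally as a real analytic hypersurface $M\subset\C^2$, and then the analytic CR map $\Psi\colon M\to\CP^3$ extends holomorphically to a neighbourhood of $M$ in $\C^2$. That theorem is precisely the missing ingredient; it is non-trivial (it fails in the $C^\infty$ category) and is what makes the strategy work.

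A smaller remark on your final step: showing $A\not\subset Q$ via the Levi form rules out an open containment, but does not by itself exclude $T_qA=H_qQ$ at an isolated $q\in\Psi(S^3)$, so the passage from $\dim(A\cap Q)\le 3$ to local equality is not quite justified. The clean argument is pointwise transversality: since $\Psi$ is a section of $Q\to S^3$, $T_q\Psi(S^3)$ projects onto $T_{p}S^3$ and thus contains a vector mapping to the fibre direction $T$, which lies outside $H_qQ$ by the definition of $H$; hence $T_qA\not\subset T_qQ$ and $A\cap Q$ is a smooth $3$-manifold equal to $\Psi(S^3)$ near $q$.
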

\begin{proof}
In the case of an analytic submersion the CR structure and the
map $\Psi$ are analytic, too. According to the real analytic embedding theorem,
see \cite{Bo}, $S^3$ can be locally CR embedded into $\C^2.$ Using the analyticity again, the map $\Psi$ can be extended uniquely to a holomorphic map on a neighborhood of 
$S^3\subset\C^2.$
It remains to prove that the images of the extensions do match to a complex surface. This follows easily by the uniqueness of the extensions.	
\end{proof}
Conversely, the intersection of a surface $A\subset\CP^3$ with $Q$ 
defines an oriented $1-$dimensional distribution (locally) which gives
rise to conformal foliations or, if one restricts oneself to small open subsets, to conformal submersions.
\begin{Exa}
The easiest examples of surface in $\CP^3$ are complex planes.
Consider the plane given by \[A:=\{[z]\in\CP^3\mid z_4=0\}.\]
As $A$ intersects each tangent sphere $Q_p\cong\CP^1$ over a point 
$p\in S^3$ exactly once, it defines a distribution on the whole $S^3.$ In fact the integral curves
of this distribution are the fibers of the conjugate Hopf fibration. To prove this note that 
$$A\cap Q=\{[z_1:z_2:1:0]\mid \ \mid z_1\mid^2+\mid z_2\mid^2=1\}.$$
That set corresponds to the complex structure which maps 
$1=N\in{TS^3}^\perp$ to $i\in\Im\H=TS^3,$ compare with \eqref{emu}. 
\end{Exa}

\section{Conformal fibrations by circles}
We prove that every conformal fibration of $S^3$ by circles is the Hopf fibration up to conformal
transformations. For details on conformal geometry of $S^3$ cf. \cite{KP}.

\subsection{The space of circles in $S^3$}
An oriented circle in $S^3\subset\R^4$ is the (nonempty) intersection of $S^3$ with an 
oriented affine $2-$plane $A.$ To describe the space of circles we introduce a $2-$form 
$\Omega$ as follows. Let $Q\subset\CP^3$ be the tangent sphere
bundle of $S^3\subset\HP^1,$ and consider the indefinite metric $\langle,\rangle$ 
on $\H^2$ as in Section \ref{tss3}. We decompose $\langle,\rangle$ into $(1,i)$ and 
$( j, k)$ parts
$$\langle,\rangle=(,)+\Omega j.$$
We can regard $(,)$ as a Hermitian form on $\C^4$ and $\Omega$ as 
a complex $2-$form on $\C^4.$ Note that $\Omega$ is 
non-degenerated and $0\neq\Omega\wedge\Omega\in\Lambda^4\C^4.$
\begin{Lem}
Let $\Omega$ be as above, and $[T],[S]\in Q\subset\CP^3$
lying over different points of $S^3.$
Then there exists an oriented circle such that $[T]$ and $[S]$ are
tangent to it in positive direction if and only if $\Omega(T,S)=0.$
\end{Lem}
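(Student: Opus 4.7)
My approach is to decouple the claim into a fibrewise uniqueness statement and a verification on one model oriented circle. Fix $p=[x:1]$ and $q=[y:1]$ in $S^3$ with $x\neq y$, and note that the fibre of $Q\to S^3$ over $q$ is the projectivisation $\P[y:1]$ of the quaternionic line $[y:1]\subset\H^2=\C^4$.

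I first study the $\C$--linear functional $L_T:=\Omega(T,\cdot)$ on $[y:1]$. Because $[x:1]$ is $\langle\cdot,\cdot\rangle$--isotropic (as $p\in S^3$) and $\Omega$ is the $j$--part of $\langle\cdot,\cdot\rangle$, the restriction of $\Omega$ to $[x:1]$ vanishes identically; in particular $\Omega(T,\cdot)\equiv 0$ on $[x:1]$. If it also vanished identically on $[y:1]$, then $x\neq y$ would give $[x:1]\cap[y:1]=\{0\}$ in $\H^2$, hence $[x:1]+[y:1]=\C^4$, forcing $\Omega(T,\cdot)\equiv 0$ on $\C^4$ and contradicting the non--degeneracy of $\Omega$ together with $T\neq 0$. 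Therefore $\ker L_T$ is one--complex--dimensional in $[y:1]$, and it cuts out a \emph{unique} point $[S_0]\in Q_q$ with $\Omega(T,S_0)=0$.

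Next I prove that the positive tangents to any oriented circle are $\Omega$--orthogonal by reducing to a model. The group $\mathrm{Sp}(1,1)\subset\GL(2,\H)\subset\GL(4,\C)$ preserves $\langle\cdot,\cdot\rangle$, hence both $(\cdot,\cdot)$ and $\Omega$, realises the orientation--preserving M\"obius group of $S^3\cong\HP^1$, and acts transitively on oriented circles. It therefore suffices to verify the claim for the great circle $\gamma(t)=[e^{it}:1]$. At $e^{it}\in S^3$ the positive tangent direction in the notation of \eqref{emu} is $\mu=i\in\Im\H$, and taking $\lambda=1$ yields $T(t)=(e^{it},1)\in\H^2$, which in the $\C^4$--coordinates of Section \ref{tss3} becomes $T(t)=(e^{it},0,1,0)$. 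Computing the $j$--part of $\langle v,w\rangle=\bar v_1w_1-\bar v_2w_2$ under the identification $\H=\C\oplus j\C$ gives
\[\Omega(z,w)=z_1w_2-z_2w_1-z_3w_4+z_4w_3,\]
and direct substitution yields $\Omega(T(t_1),T(t_2))=0$ for all $t_1,t_2$.

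Combining the two steps settles the lemma. The ``only if'' direction is the previous paragraph, applied to the given circle after pulling it back to $\gamma$ by $\mathrm{Sp}(1,1)$. For ``if'', given $[T]\in Q_p$ and $[S]\in Q_q$ with $\Omega(T,S)=0$, let $C$ be the unique oriented circle through $p,q$ with positive tangent $[T]$ at $p$ and let $[S']\in Q_q$ be its positive tangent at $q$; the ``only if'' direction gives $\Omega(T,S')=0$, and the uniqueness of $[S_0]$ proved above forces $[S]=[S']$, so $C$ is the desired circle. The main obstacle is the uniqueness assertion: ruling out the degenerate case $L_T\equiv 0$ on $[y:1]$ requires both the vanishing of $\Omega$ on each $\langle\cdot,\cdot\rangle$--isotropic quaternionic line and the global non--degeneracy of $\Omega$ on $\C^4$.
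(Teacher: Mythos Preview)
Your proof is correct. For the ``only if'' direction you follow the paper's strategy: use transitivity of the conformal group $\mathrm{Sp}(1,1)$ (preserving $\langle\cdot,\cdot\rangle$ and hence $\Omega$) on oriented circles to reduce to a model, then verify $\Omega$--orthogonality there. You carry out the model computation in explicit $\C^4$--coordinates, whereas the paper simply identifies the tangent set of the model circle with the projectivisation of the $\Omega$--null plane spanned by $e_0,e_3$; these amount to the same thing (your formula for $\Omega$ may differ from the paper's by complex conjugation depending on conventions, but only the zero locus matters).

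Where your argument genuinely adds content is the ``if'' direction. The paper dismisses it as ``analogous''; you instead prove a clean fibrewise uniqueness statement---that for fixed $[T]\in Q_p$ the functional $\Omega(T,\cdot)$ is non-zero on the fibre $Q_q$ and hence singles out a unique $[S_0]$---and combine it with the standard M\"obius fact that two distinct points together with a tangent direction at one determine a unique oriented circle. This is more transparent than an ``analogous'' transitivity argument and, incidentally, also yields directly the content of the paper's subsequent Remark~\ref{nulllines}: the span of $T$ and any $S$ with $\Omega(T,S)=0$ is an $\Omega$--null plane, and your uniqueness shows the contact line meets each fibre of $Q$ over the circle in exactly one point. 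The only step you leave implicit is the existence/uniqueness of that circle through $p,q$ with prescribed tangent $[T]$ at $p$; this is elementary M\"obius geometry and safe to assume.
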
 
\begin{proof}
We are only going to prove that $\Omega(T,S)=0$ is a necessary condition. The
converse direction follows in an analogous manner.
Every conformal transformation of 
$S^3\subset\HP^1$ is given by a quaternionic linear map $\Phi\in\GL(2,\H)$ with $\Phi^*\langle,\rangle=\langle,\rangle.$
Then $\Phi$ also acts on $Q\subset\CP^3.$
In fact, this action is given by the differential of the conformal
transformation $\Phi$ acting on the tangent sphere bundle. 
Therefore we can assume that $[T],[S]\in Q$ are tangent in
the positive direction to the circle given by 
$\{[z:1]\in\HP^1\mid z\in S^1\subset\Span(1,i)\}$ with positive 
oriented tangents given by the set
$\{[z:0:1:0]\in\CP^3\mid z\in S^1\subset\C\}.$
This set is the intersection of $Q$ with the projectivization of the null-plane 
$\Span(e_0,e_3)$ with respect to $\Omega.$
\end{proof}
\begin{Rem}\label{nulllines}
The proof of this lemma shows that every circle is given by the
intersection of $Q$ with a contact line in $\CP^3,$ i.e. the projectivization of a
$2-$plane on which $\Omega$ vanishes. This intersection
already determines the contact line. Conversely, the intersection of a contact line
with $Q$ is the set of tangents of a circle or, in the case of a plane
spanned by $v$ and $vj,$ the tangent sphere over a point $v\H\in S^3\subset\HP^1.$ The latter case
corresponds to degenerate circles. 
\end{Rem}
The space of contact lines in $\CP^3$ is given by the complex 
$3-$quadric $\mathcal Q^3\subset\CP^4$ via the Klein correspondence: Consider the 
$5-$dimensional subspace $W$ of $\Lambda^2(\C^4)$ defined as
$$W:=\{\omega\in\Lambda^2(\C^4)\mid \Omega(\omega)=0\}.$$
It is easy to verify that $\frac{1}{2}\Omega\wedge\Omega$ is 
non-degenerated on $W.$ Thus the space of null-lines with respect to 
$\frac{1}{2}\Omega\wedge\Omega$ is a $3-$dimensional
quadric $\mathcal Q^3\subset PW.$ A short computation  shows that every null-line in $W$ has the
shape $[v\wedge w],$ hence it corresponds to the contact line $P\Span(v,w)$ in 
$\CP^3.$ Conversely, the contact line $P\Span(v,w)$ in $\CP^3$ determines
the null-line $[v\wedge w]$ as an element of $\mathcal Q^3\subset W.$ Altogether we obtain the well-known:
\begin{The}
The space of oriented circles in $S^3$ is given by $\mathcal Q^3\subset W.$
\end{The}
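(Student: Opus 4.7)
The plan is to combine the preceding Lemma and Remark with the classical Klein--Pl\"ucker correspondence. The Lemma (together with the Remark that follows it) already supplies the geometric content: every (possibly degenerate) oriented circle in $S^3$ is cut out of $Q$ by a unique contact line $\mathbb{P}E\subset\CP^3$, i.e.\ by a unique $2$-plane $E\subset\C^4$ on which $\Omega$ vanishes, and this assignment is a bijection. So the theorem reduces to identifying the set of contact $2$-planes in $\C^4$ with $\mathcal Q^3\subset\mathbb{P}W$.

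I would use the Pl\"ucker embedding $E=\Span(v,w)\mapsto[v\wedge w]\in\mathbb{P}(\Lambda^2\C^4)$, which is well-defined and injective on $Gr(2,\C^4)$. Its image is exactly the set of decomposable classes, which are characterized by the single quadratic equation $\omega\wedge\omega=0\in\Lambda^4\C^4$. Since $\Lambda^4\C^4$ is one-dimensional and $\tfrac12\Omega\wedge\Omega\neq 0$ trivializes it, the map $\omega\mapsto\omega\wedge\omega$ yields a non-degenerate complex quadratic form on $\Lambda^2\C^4$ whose projective zero locus in $\CP^5$ is the Klein quadric.

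Next, I would translate the contact condition $\Omega|_E=0$ into the Pl\"ucker picture: it reads $\Omega(v,w)=0$, which is precisely $v\wedge w\in W$ in the notation of the excerpt. The restriction of $\omega\wedge\omega$ to $W$ is, as observed just before the theorem, non-degenerate, so its projective zero locus is a smooth $3$-dimensional quadric $\mathcal Q^3\subset\mathbb{P}W\cong\CP^4$. By Pl\"ucker, these zeros are exactly the decomposable elements of $W$, i.e.\ the contact $2$-planes in $\C^4$. Composing with the bijection from the first paragraph then identifies the space of oriented circles with $\mathcal Q^3$.

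The only genuinely computational step---the one the author flags with ``a short computation''---is verifying that the quadratic form $\omega\wedge\omega$ remains non-degenerate after restriction to the $5$-dimensional subspace $W$. I would expect to handle this by choosing a basis of $\C^4$ adapted to $\langle,\rangle$, writing $\Omega$ and $W$ explicitly in coordinates, and checking non-degeneracy by a direct dimension count; once that is in place, the remaining identifications are bookkeeping.
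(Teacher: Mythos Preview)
Your proposal is correct and follows essentially the same route as the paper, which does not give a formal proof but states the theorem as a summary of the preceding Lemma, Remark, and Klein-correspondence discussion. One minor point: the phrase ``a short computation'' in the paper refers to showing that every null-line in $W$ is decomposable (not to the non-degeneracy, which is the earlier ``easy to verify''), but since you derive decomposability from the standard Pl\"ucker characterization $\omega\wedge\omega=0$, your argument covers this step as well.
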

We need to characterize points lying on a circle in terms of null lines. To do so we introduce the real structure
$$\sigma\colon \Lambda^2(\C^4)\to\Lambda^2(\C^4); v\wedge w=(vj)\wedge(wj)$$
on $\Lambda^2(\C^4).$ This is a real linear map with $\sigma^2=\Id.$ In fact, 
$\sigma$ can be restricted to the space of $\Omega-$null $2-$vectors: $\sigma\colon W\to W.$
Moreover, the real points $[\omega]\in \mathcal Q^3\cap PW,$ i.e. $\sigma(\omega)=\omega,$ 
have exactly the form $\omega=v\wedge vj.$ This means that they can be regarded as quaternionic lines $v\H$ or points in $S^3\subset\HP^1,$ compare with Remark \ref{nulllines}.
\begin{Pro}\label{interperp}
A point $p=v\H\in S^3\subset\HP^1$ represented
by the null-line  $[\omega]=[v\wedge vj]\in \mathcal Q^3\subset PW$ lies on an oriented
circle $k$ represented by $[\eta]\in \mathcal Q^3\subset PW$ if and only if  
$$\frac{1}{2}(\Omega\wedge\Omega)(\omega\wedge\eta)=0.$$
\end{Pro}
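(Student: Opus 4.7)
The plan is to translate the geometric condition $p\in k$ into an intersection condition on complex $2$-planes in $\C^4$, and then recognize the algebraic criterion $\omega\wedge\eta=0$ via dualising with the nondegenerate top form $\Omega\wedge\Omega$.

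First, I would interpret the two null $2$-vectors as $2$-planes in $\C^4$: $\eta$ represents the contact plane $E\subset\C^4$ whose projectivization $PE$ is the contact line of the circle $k$ (by Remark \ref{nulllines}), while $\omega=v\wedge vj$ represents the complex $2$-plane $v\H=\Span(v,vj)\subset\C^4$. The key observation is that $v\H$, viewed as a complex $2$-plane, is precisely the twistor fibre $Q_p=\pi^{-1}(p)$: the quaternionic line $p$ becomes the tangent sphere over $p$ upon projectivization, and the hypothesis $\langle v,v\rangle=0$ coming from $p\in S^3$ forces every complex line in $v\H$ to be isotropic for $(,)$, so indeed $P(v\H)\subset Q$.

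Second, I would argue that $p\in k$ if and only if $k$ has a tangent direction at $p$, i.e.\ a point in $Q_p=P(v\H)$. By Remark \ref{nulllines}, the tangent directions of $k$ are exactly $PE\cap Q$, so $p\in k$ is equivalent to $PE\cap P(v\H)\neq\emptyset$, which in turn is equivalent to $E\cap v\H\neq 0$ as complex $2$-planes in $\C^4$. The degenerate case, where $k$ reduces to a point, corresponds to $E=v\H$ and automatically satisfies the intersection condition. Third, I would invoke the standard Grassmannian fact: two decomposable $2$-vectors $\omega,\eta\in\Lambda^2\C^4$ have representing planes intersecting nontrivially if and only if $\omega\wedge\eta=0$ in the one-dimensional space $\Lambda^4\C^4$. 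Since $\tfrac{1}{2}\Omega\wedge\Omega$ is a nonzero element of $\Lambda^4(\C^4)^*$ (as already exploited in defining $\mathcal Q^3$), this vanishing is equivalent to $\tfrac{1}{2}(\Omega\wedge\Omega)(\omega\wedge\eta)=0$, which is the desired criterion.

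The main obstacle is really the first conceptual step: recognizing that the real null $2$-vector $v\wedge vj$ encodes the entire twistor fibre $Q_p$, not just a single tangent direction above $p$, and that this fibre genuinely sits inside $Q$ because every complex line in an isotropic quaternionic line is automatically isotropic. Once this identification is in place, steps two and three reduce to elementary linear algebra of $2$-planes in a $4$-dimensional space, and the symmetry of the intersection condition automatically yields both implications simultaneously, with no separate ``converse direction'' to argue.
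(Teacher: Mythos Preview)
Your argument is correct and is essentially the paper's proof in more conceptual dress: both reduce $p\in k$ to the two complex $2$-planes $\Span(v,vj)$ and $E=\Span(w,\tilde w)$ meeting nontrivially in $\C^4$, and then identify this with $\omega\wedge\eta=0$ via the nondegeneracy of $\Omega\wedge\Omega$. The paper handles the two implications separately (a common tangent $[v]$ for the forward direction, linear dependence plus a short case split for the converse), while you invoke the Pl\"ucker criterion and the identification $P(v\H)=Q_p\subset Q$ to get both directions at once; this is cleaner but not a different strategy.
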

\begin{proof}
If $p$ lies on the circle $k$ we choose $[v],[w]\in\CP^3$ tangent to $k$ such that $k$ is given by $[\eta]=[v\wedge w]\in \mathcal Q^3$
and $p$ is given by $[\omega]=[v\wedge vj]\in \mathcal Q^3.$ Then
$$\frac{1}{2}(\Omega\wedge\Omega)(\omega\wedge\eta)=\frac{1}{2}(\Omega\wedge\Omega)(v\wedge vj\wedge v\wedge w)=0.$$

Conversely, let $\omega=v\wedge vj$ and $\eta=w\wedge \tilde w$
representing $p$ and $k$ in $\mathcal Q^3$ with
$\frac{1}{2}(\Omega\wedge\Omega)(\omega\wedge\eta)=0.$ As
$\frac{1}{2}\Omega\wedge\Omega\neq0\in\Lambda^4(\C^4)$ we deduce that $v,vj, w$ and $\tilde w$ are linear dependent. If
$\eta=w\wedge\tilde w$ represents a degenerate circle, we 
can assume $\tilde w=wj,$ and $v,vj, w, wj$ are (complex)
linear dependent if and only if they lie on the same quaternionic 
line. That means $v\H=w\H\in S^3\subset\HP^1$ and $p=k.$ 
Otherwise, the complex plane $\Span(w,\tilde w)$ would intersect 
the complex plane $\Span(v,vj).$ For an element 
$\hat w\in\Span(w,\tilde w)\cap\Span(v,vj),$  
$[\hat w]\in Q\subset\CP^3$ is tangent to the
circle  $k$ at $p.$
\end{proof}

Let
$\pi\colon S^3\to\CP^1$ be a conformal fibration by circles. The
fibers have the following induced orientation: Let $T$ be tangent to
the fiber and $A,B\perp T$ such that $\pi_*A\wedge \pi_*B>0$
represents the orientation of the Riemannian surface $\CP^1.$ We
say that $T$ is in positive direction to the fiber if 
$T\wedge A\wedge B>0.$ 

We give a more invariant proof of a theorem of Baird \cite{B}:
\begin{The}\label{gamma}
Let $\pi\colon S^3\to\CP^1$ be a conformal fibration such that all fibers are circles with the induced orientation. Then the curve 
$\gamma\colon\CP^1\to \mathcal Q^3$
which maps each $p\in\CP^1$ to the oriented circle 
$\pi^{-1}(p)\in \mathcal Q^3\subset PW$ is 
holomorphic. 
\end{The}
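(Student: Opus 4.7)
The plan is to show that the composition $\tilde\gamma := \gamma \circ \pi\colon S^3 \to \mathcal Q^3$ is CR holomorphic with respect to the CR structure on $S^3$ defined from $\pi$. Since $\pi$ is conformal, $d\pi|_{\mathcal H}\colon \mathcal H \to T\CP^1$ is a fiberwise complex linear isomorphism, so CR holomorphicity of $\tilde\gamma$ forces $\gamma$ to be holomorphic. The main work is an explicit Plücker formula for $\tilde\gamma$ followed by a first-order calculation, using Proposition \ref{CRPsi} (CR holomorphicity of $\Psi$) and Proposition \ref{cofo} (conformality of $\pi$).

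To set up the formula, locally choose a CR holomorphic lift $\tilde\Psi\colon S^3 \to \C^4 \setminus\{0\}$ of $\Psi$, obtained by composing $\Psi$ with a local holomorphic section of $\C^4\setminus\{0\}\to\CP^3$. Fix a metric in the conformal class and let $T$ denote the unit vertical vector field in the positive fiber direction. For each $x \in S^3$, the circle $C_x = \pi^{-1}(\pi(x))$ maps under $\Psi$ into the contact line $L_x$ corresponding to $\gamma(\pi(x))$ (Remark \ref{nulllines}), and $L_x$ is the projectivization of the 2-plane in $\C^4$ spanned by $\tilde\Psi(x)$ and the tangent direction $d\tilde\Psi(T)_x$ of $\Psi(C_x)$ at $\Psi(x)$. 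Hence
\[
\tilde\gamma(x) = [\tilde F(x)] \in \mathcal Q^3 \subset PW, \qquad \tilde F(x) := \tilde\Psi(x) \wedge d\tilde\Psi(T)_x .
\]

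For the CR holomorphicity check, let $X$ be a horizontal vector field; using $X(T\tilde\Psi) = T(X\tilde\Psi) + [X,T]\tilde\Psi$ and $d\tilde\Psi(\mathcal J X) = i\,d\tilde\Psi(X)$, one arrives at
\[
d\tilde F(\mathcal J X) - i\,d\tilde F(X) = \tilde\Psi(x) \wedge d\tilde\Psi\bigl([\mathcal J X, T] - i[X, T]\bigr).
\]
Conformality of $\pi$ combined with the fact that the flow of $T$ preserves $\pi$ yields the bracket identity $([T, \mathcal J X])^{\mathcal H} = \mathcal J ([T, X])^{\mathcal H}$ for $X \in \mathcal H$. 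Setting $Y := ([T, X])^{\mathcal H} = Y^{1,0} + Y^{0,1}$, a direct computation gives
\[
[\mathcal J X, T] - i [X, T] \equiv 2i\, Y^{0,1} \pmod{\C \cdot T}.
\]
The $(0,1)$-piece is annihilated by $d\tilde\Psi$ (CR holomorphicity), and the vertical piece maps into $\C \cdot d\tilde\Psi(T)$; wedging with $\tilde\Psi(x)$ produces a scalar multiple of $\tilde F(x)$, which is projectively zero. Hence $\tilde\gamma$, and therefore $\gamma$, is holomorphic.

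The main obstacle is translating Proposition \ref{cofo} into the bracket identity $([T, \mathcal J X])^{\mathcal H} = \mathcal J ([T, X])^{\mathcal H}$; this becomes transparent once one observes that $\mathcal J$ on $\mathcal H$ is the pullback of $J$ on $T\CP^1$ along $d\pi$, so the identity follows by differentiating $d\pi \circ d\phi_t = d\pi$ at $t=0$. Once this is in hand, the type decomposition and the fact that $d\tilde\Psi$ kills $T^{(0,1)}$ finish the computation essentially mechanically.
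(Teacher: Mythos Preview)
Your argument is correct and follows the same overall strategy as the paper: write $\gamma\circ\pi$ in Pl\"ucker coordinates using a CR holomorphic lift of $\Psi$, differentiate, and show the $(0,1)$--part vanishes modulo the lift. The only technical difference is that the paper represents the contact line by \emph{two distinct points} $\psi$ and $\psi\circ\phi$ (using the time--$s$ flow of $T$ and the finite relation $\phi_*A=A+\lambda T$ for horizontal lifts $A$), whereas you represent it by \emph{point plus tangent}, $\tilde\Psi\wedge T\tilde\Psi$, and therefore need the infinitesimal bracket identity $[T,\mJ X]^{\mH}=\mJ[T,X]^{\mH}$ together with the type decomposition. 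These are the finite and infinitesimal versions of the same observation; in fact, had you chosen $X$ to be a horizontal lift (so that $[T,X]$ is purely vertical and $Y=0$), your computation would collapse to essentially the paper's one without the $Y^{0,1}$ step.
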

\begin{proof}
Fix a metric on $S^3.$ Let $T$ be the unit length tangent 
vector in positive fiber direction. The fibers of $\pi$ are the closed integral curves of
the flow $\Phi$ of $T.$ There exists 
$s>0$ such that for all $\phi:=\Phi_s\colon S^3\to S^3$ is a fixed point
free diffeomorphism. As an example, one could take any $s$ with
$s<length(\pi^{-1}(p))$ for all $p\in S^2.$ Let $A, B$ be the 
horizontal lifts of vector fields defined on the base space such that $(T,A,B)$ is an
oriented orthonormal frame field. Because of $\pi_*[A,T]=0=\pi_*[B,T]$ there exist functions 
$\lambda,\mu$ such that $\phi_*A=A+\lambda T$ and 
$\phi_*B=B+\mu T.$

We have already seen that the tangent map 
$\Psi\colon S^3\to Q\subset\CP^3$ is CR-holomorphic.
Let $\psi$ be a local CR holomorphic lift to $\C^4.$ We define
$\chi=\psi\circ\phi.$ Then $\gamma$ is given by
$$\gamma(\pi(p))=[\psi(p)\wedge\chi(p)]=[\psi(p)\wedge T\cdot\psi (p)]\in \mathcal Q^3\subset PW.$$
We set $\hat\gamma=\psi\wedge \chi\colon U\subset S^3\to W\subset\Lambda^2(\C^4)$ 
and compute
\begin{equation*}\begin{split}
A\cdot\hat\gamma&=A\cdot\psi\wedge\chi+\psi\wedge A\cdot\chi
=A\cdot\psi\wedge\chi+\psi\wedge (A\cdot \psi)\circ\phi+
\psi\wedge (\lambda T\cdot \psi)\circ\phi\\
B\cdot\hat\gamma&=B\cdot\psi\wedge\chi+\psi\wedge B\cdot\chi
=B\cdot\psi\wedge\chi+\psi\wedge (B\cdot \psi)\circ\phi+
\psi\wedge (\mu T\cdot \psi)\circ\phi.\\
\end{split}\end{equation*}
From $\mJ A=B$ and the fact that $\Psi$ is CR holomorphic we
deduce that $$iA\cdot\psi=B\cdot\psi\mod \Psi.$$ As 
$[\hat\gamma]=[\psi\wedge T\cdot \psi]$ we have
$(\mJ A)\cdot (\gamma\circ\pi)=i(A\cdot(\gamma\circ\pi)).$
This means that $\gamma\circ\pi$ is CR holomorphic. 
Since $\ed\pi$ maps the Levi distribution of  $S^3$ CR holomorphically
onto the tangent space of $\CP^1$ and $\gamma\circ\pi$ is CR holomorphic, it is evident that
$\gamma$ must be holomorphic.
\end{proof}

\begin{Pro}\label{grad1}
Let $\pi\colon S^3\to S^2$ be a conformal fibration with circles as fibers.
Then the holomorphic curve $\gamma\colon\CP^1\to \mathcal Q^3$ given
by Theorem \ref{gamma} has degree $1.$
\end{Pro}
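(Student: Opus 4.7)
The plan is to compute the degree of $\gamma$ as an intersection number with a hyperplane in $PW = \CP^4$ and to reduce the whole question to a transversality statement at a single generic point. By Proposition~\ref{interperp}, for each $p = v\H\in S^3$ the set of oriented circles through $p$ is the hyperplane section $\mathcal Q^3\cap H_p$, where $H_p$ is cut out by the linear form $\ell_p\colon\eta\mapsto\frac{1}{2}(\Omega\wedge\Omega)(\omega_p\wedge\eta)$ with $\omega_p = v\wedge vj$. For $p$ with $\gamma(\CP^1)\not\subset H_p$, the pullback $\gamma^*H_p$ is an effective divisor on $\CP^1$ of degree $\deg\gamma$, and its support is $\{q\in\CP^1 : p\in\pi^{-1}(q)\} = \{\pi(p)\}$, since each point of $S^3$ lies on a unique fiber of $\pi$. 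Thus $\gamma^*H_p = \deg(\gamma)\cdot[\pi(p)]$, and it suffices to show that this divisor is reduced at some $p_0$.

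To verify this I would fix a generic $p_0$, set $q_0 := \pi(p_0)$, and use the local lift $\hat\gamma = \psi\wedge\chi\colon U\subset S^3\to W$ of $\gamma\circ\pi$ from the proof of Theorem~\ref{gamma}, with the flow time $s$ in $\phi = \Phi_s$ chosen small. For a horizontal $A\in(\ker\ed\pi_{p_0})^\perp$, the Leibniz rule applied to $\ell_{p_0}\circ\hat\gamma$ at $p_0$, together with $\omega_{p_0} = \psi(p_0)\wedge\psi(p_0)j$, yields
\[
A\bigl(\ell_{p_0}\circ\hat\gamma\bigr)\bigr|_{p_0} = \frac{1}{2}(\Omega\wedge\Omega)\bigl(\psi(p_0)\wedge\psi(p_0)j\wedge A\psi(p_0)\wedge\chi(p_0)\bigr),
\]
since the other term from the Leibniz rule contains $\psi(p_0)$ twice and hence vanishes. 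As $\Omega\wedge\Omega$ is a nonzero top form, this is nonzero precisely when $A\psi(p_0)\notin V_s := \Span_\C(\psi(p_0),\psi(p_0)j,\chi(p_0))$, and any nonzero such derivative in a horizontal direction descends via $\pi$ to a nonzero derivative of the section $\ell_{p_0}\circ\gamma$ at $q_0\in\CP^1$, giving the simplicity of $\gamma^*H_{p_0}$ at $q_0$.

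The main obstacle is therefore the linear independence claim, which I would handle by a limiting argument in $s$. Taylor expansion of the flow gives $\chi(p_0) = \psi(p_0) + sT\psi(p_0) + O(s^2)$, so $V_s$ converges as $s\to 0$ to $V_0 := \Span_\C(\psi(p_0),\psi(p_0)j,T\psi(p_0))$. Under the $\R$-linear isomorphism $\C^4/\psi(p_0)\H \cong T_{p_0}\HP^1$, the residue $A\psi(p_0)\bmod\psi(p_0)\H$ corresponds to $A$ itself, and the complex structure inherited from $i$-multiplication on $\C^4$ matches, via the twistor correspondence \eqref{jt}, the almost complex Hermitian structure $\mJ$ on $T_{p_0}\HP^1$ associated with $\Psi(p_0) = [T_{p_0}]$; in particular $\mJ(N_{p_0}) = T_{p_0}$, where $N$ is the outer unit normal of $S^3\subset\HP^1$. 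Consequently $V_0/\psi(p_0)\H$ corresponds to the real $2$-plane $\Span_\R(T_{p_0},\mJ T_{p_0}) = \Span_\R(T_{p_0},N_{p_0})$, which is the orthogonal complement in $T_{p_0}\HP^1$ of the horizontal space $(\ker\ed\pi_{p_0})^\perp\subset T_{p_0}S^3$. The two real $2$-planes therefore intersect trivially, and by continuity the same holds for $V_s/\psi(p_0)\H$ when $s$ is small; any nonzero horizontal $A$ then yields the required noncontainment, giving $\deg\gamma = 1$.
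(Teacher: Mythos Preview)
Your argument is correct and takes a genuinely different route from the paper's. Both proofs begin with the same observation---that by Proposition~\ref{interperp} the hyperplane $H_p$ of circles through $p\in S^3$ meets $\gamma(\CP^1)$ set-theoretically in the single point $\pi(p)$, so $\gamma^*H_p=(\deg\gamma)\cdot[\pi(p)]$---but diverge from there. The paper normalises coordinates on $W$, writes a polynomial lift $\hat\gamma(z)=v_0+zv_1+\cdots+z^nv_n$, and by testing against a whole circle's worth of real hyperplanes forces $v_1,\dots,v_{n-1}\in\C v_0$; a further case analysis on $v_n$ then exhibits a specific hyperplane met transversally, giving $n=1$. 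You instead establish transversality directly at an arbitrary $p_0$ by computing $A(\ell_{p_0}\circ\hat\gamma)$ for horizontal $A$ and reducing the non-vanishing to the linear-algebra statement $A\psi(p_0)\notin V_s$; the limiting argument $V_s\to V_0$ together with the twistor identification of $\C^4/\psi(p_0)\H$ with $T_{p_0}\HP^1$ then shows that $V_0/\psi(p_0)\H$ corresponds to $\Span_\R(T_{p_0},N_{p_0})$, which is orthogonal to the horizontal plane. Your approach is shorter and more geometric, and avoids coordinates entirely; the paper's explicit normal form, on the other hand, is reused verbatim in the proof of Theorem~\ref{confcirc}, so with your method that theorem would need its own (straightforward) coordinate setup. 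One minor remark: your phrase ``fix a generic $p_0$'' is unnecessary---the argument works at every $p_0$ once $s$ is chosen small relative to that point, and you only need one such $p_0$ and one such $s$.
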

\begin{proof}
Let $n$ be the degree of 
$\gamma\colon\CP^1\to \mathcal Q^3\subset PW.$ We take a
basis $(e_0,.., e_4)$ of $W$ consisting of real points, i.e. 
$\sigma(e_i)=e_i,$ such that 
$$(,):=\frac{1}{2}\Omega\wedge\Omega=-e_0^*\otimes e_0^*
+e_1^*\otimes e_1^*+\cdot\cdot\cdot+e_4^*\otimes e_4^*,$$ 
and such that we 
have $\gamma(0)=[e_3+ie_4]$ for a suitable holomorphic 
coordinate $z$ on $\CP^1.$ By changing the holomorphic coordinate $z$ we can assume that the point
$[e_0+e_3]\in S^3\subset \mathcal Q^3$ lies on the circle 
$\gamma(\infty).$
We take a holomorphic lift 
$$\hat\gamma(z)=v_0+zv_1+\cdot\cdot\cdot+z^nv_n$$
of $\gamma$ to $W$ with $v_0=e_3+ie_4.$ 
Since every point $p\in S^3$ lies on exactly one circle, namely $\gamma(\pi(p)),$ it results from Proposition \ref{interperp} that $\gamma$
intersects each hyperplane $Pv^\perp\cap \mathcal Q^3$ at exactly one
point, where $[v]\in S^3\subset\R P^4\subset PW.$ But every 
hyperplane must be intersected $n-$times by $\gamma$ counted with multiplicities, hence we know that $\gamma$ intersects $Pv^\perp$ in exactly one point with order $n.$
Therefore  
$$Pv_0^\perp\cap S^3=\cup_{\varphi\in[0,2\pi]}[e_0+\cos\varphi e_1+\sin\varphi e_2]$$
proves that
$$v_1,v_2,..,v_{n-1}\in\cap_{\varphi\in[0,2\pi]}(e_0+\cos\varphi e_1+\sin\varphi 
e_2)^\perp=\Span\{e_3,e_4\}.$$
Using $(\hat\gamma(z),\hat\gamma(z))=0$ this implies inductively 
$v_1=\mu_1v_0,..,v_{n-1}=\mu_{n-1}v_0$ for suitable $\mu_i\in\C,$
and 
$(v_0,v_n)=(v_n,v_n)=0.$ As $(v_n,e_0+e_3)=0$ we see that
$v_n=ae_0+be_1+ce_2+a(e_3+ie_4)$ with $a^2=b^2+c^2$ and 
$bc\neq0.$
For the holomorphic coordinate $\omega=1/z$ we obtain another
holomorphic lift
$\tilde\gamma(\omega)=(ae_0+be_1+ce_2+a(e_3+ie_4))+(\omega\mu_{n-1}+\cdot\cdot\cdot+\omega^n)v_0$
of $\gamma,$
thus $\gamma$ intersects the hyperplane 
$P(e_0+e_3)^\perp$ in  
$z=\infty$ $n-$times if and only if $\mu_1=\cdot\cdot\cdot=\mu_{n-1}=0.$

First we consider the case $a=0,$ i.e. $v_n=be_1+ce_2$
with $b^2+c^2=0.$ After a change of the coordinate $z$ on 
$\CP^1$ we obtain the form 
$\hat\gamma(z)=e_3+ie_4+z^n(e_1+ce_2)$
where $c^2=-1.$ Hence the circle 
$\gamma(1)$ contains the point
$[e_0+\frac{1}{\sqrt 2}(e_1-e_3)]\in S^3.$ The intersection of $\gamma$ and the hyperplane 
$P(e_0+\frac{1}{\sqrt 2}(e_1-e_3))^\perp$ is, at $z=1,$ only of 
order $1$ because
$$(\hat\gamma'(1),e_0+\frac{1}{\sqrt2}(e_1-e_3))=\frac{n}{\sqrt2}\neq0.$$
Since $\gamma$ must intersect $(e_0+\frac{1}{\sqrt2}(e_1-e_3))^\perp$ at $z=1$ with order
$n$ we see that $n=1$ when
$a=0.$

If $a\neq0$ we can change  the coordinate $z$ by a factor such
that $[e_0-e_3]$ lies on $\gamma(1).$ This implies 
$0=(\hat\gamma(1),e_0-e_3)=-a-(a+1).$
The curve $\gamma$ intersects $P(e_0-e_3)^\perp$ at $z=1$ with multiplicity $n,$  
which implies for $n>1$ that $0=(\hat\gamma'(1),e_0-e_3)=-a-(a+n)$ contradicting
$0=-2a+1.$ Thus $n=1.$
\end{proof}
With this result it is easy to prove the following
theorem:
\begin{The}\label{confcirc}
Up to conformal transformations of $S^2$ and $S^3,$ every conformal fibration
of $S^3$ by circles is the Hopf fibration.
\end{The}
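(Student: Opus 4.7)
By Theorem \ref{gamma} and Proposition \ref{grad1}, every conformal fibration of $S^3$ by circles gives rise to a linear embedding $\gamma\colon\CP^1\to\mathcal Q^3\subset\CP^4$, whose image $\ell$ is a projective line on the $3$-quadric; the Hopf fibration produces in the same way a specific line $\ell_H$. The plan is to show that some conformal transformation of $S^3$ carries $\ell$ to $\ell_H$, and that a conformal transformation of $\CP^1=S^2$ then reparametrizes $\gamma$ to match the Hopf parametrization.

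The conformal group $G$ of $S^3$ acts on $\H^2$ preserving the indefinite Hermitian form $\langle,\rangle$, hence acts on $W\cong\C^5$ preserving both the complex quadratic form $(,)=\frac{1}{2}\Omega\wedge\Omega$ and the real structure $\sigma$. This induces an action of $G$ on $\mathcal Q^3\subset PW$ and on the family of projective lines it contains; under the identification of real points of $\mathcal Q^3$ with $S^3$ and of null-lines in $\CP^3$ with oriented circles, this is simply the natural $G$-action on oriented circles in $S^3$. It therefore suffices to prove that $G$ acts transitively on the set of \emph{fibration lines} in $\mathcal Q^3$, i.e.\ those lines $\ell$ for which every hyperplane $[v]^\perp$ with $[v]\in S^3$ meets $\ell$ in exactly one point (by Proposition \ref{interperp}, this is precisely the condition that the associated family of circles fibers $S^3$).

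I would carry out the transitivity by normalization. Pick a fiber $C_0=\gamma(z_0)$ and apply an element of $G$ sending $C_0$ to a chosen Hopf fiber, after which $\ell$ and $\ell_H$ share a point $[\omega_0]\in\mathcal Q^3$. Since a line on the $3$-quadric through a fixed point is determined by its tangent direction at that point, it remains to match the tangent directions of $\ell$ and $\ell_H$ at $[\omega_0]$ using the stabilizer of the chosen Hopf fiber inside $G$; a conformal reparametrization of $\CP^1$ then handles the remaining freedom in $\gamma$.

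The main obstacle is verifying that this stabilizer acts transitively on the relevant family of tangent directions. A concrete approach is to use the normal form for $\gamma$ already established in the proof of Proposition \ref{grad1}---where $\gamma$ is written as $\hat\gamma(z)=v_0+zv_1$ with $v_0=e_3+ie_4$ and $v_1$ subject to the parametrization by $(a,b,c)$ appearing there---and to exhibit explicit one-parameter subgroups of $G$ whose action absorbs these parameters until $\gamma$ takes the form corresponding to $\ell_H$. A dimension count ($\dim G=10$ against the $6$-dimensional real family of fibration lines) is consistent with these lines forming a single $G$-orbit, but the heart of the argument is the explicit matching, which is the step most likely to require care.
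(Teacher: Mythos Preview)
Your strategy coincides with the paper's: pass via Theorem~\ref{gamma} and Proposition~\ref{grad1} to a projective line on $\mathcal Q^3$, identify the conformal group of $S^3$ with the real $(,)$-orthogonal, $\sigma$-equivariant transformations of $W$, and then normalize the line by a conformal transformation of $S^3$ followed by a M\"obius reparametrization of $\CP^1$. The only substantive difference is one of completeness: the step you flag as ``most likely to require care'' is exactly where the paper finishes the argument, and you have not carried it out.

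The paper's endgame is short and concrete. After a conformal transformation of $S^3$ sending $\lambda(0)$ to $[e_3+ie_4]$ and a choice of coordinate on $\CP^1$ placing $[e_0+e_3]$ on $\lambda(\infty)$, the degree-$1$ lift reads $\hat\lambda(z)=e_3+ie_4+zb(e_1\pm ie_2)$ for some $b\in\C$; the remaining parameters are then absorbed by the reparametrization $\tilde z=1/(bz)$ on $\CP^1$ together with the real isometry of $W$ given by $e_2\mapsto\pm e_2$ and $e_i\mapsto e_i$ for $i\neq 2$. To turn your outline into a proof you owe the reader precisely this explicit reduction of the parameters $(a,b,c)$ from Proposition~\ref{grad1}. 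Your dimension count is a consistency check, not an argument, and the abstract claim that the stabilizer of a Hopf fiber acts transitively on the relevant tangent directions needs justification: not every line on $\mathcal Q^3$ through $[e_3+ie_4]$ comes from a fibration, so you must actually identify the fibration lines as a single orbit rather than invoke unspecified one-parameter subgroups.
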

\begin{proof}
We will show that for each curve $\lambda\colon\CP^1\to \mathcal Q^3$
which is given by a conformal submersion via Theorem \ref{gamma} there
is a projective isomorphism $\mathcal Q^3\to \mathcal Q^3$ resulting from a
conformal transformation of $S^3,$ such that $\lambda$ is mapped to the curve $\gamma\colon\CP^1\to \mathcal Q^3$ given 
by $\gamma([z:w])=[z(e_1+ie_2)+w(e_3+ie_4)],$
where $e_0,..,e_4$ is a basis of $W$ as in the proof of
Proposition \ref{grad1}. Using this basis,  
$S^3\subset \mathcal Q^3$ and the sphere in the light-cone model can be identified, and 
we see that the real orthogonal transformations 
$\Phi\colon W\to W,$ i.e. 
$\sigma\circ\Phi=\Phi\circ\sigma,$ and $\Phi^*(,)=(,),$ with
$(\Phi(e_0),e_0)<0$ are exactly the conformal transformations of $S^3.$

As in the proof of Proposition \ref{grad1} we can take a
holomorphic coordinate $z$ on $\CP^1$ such that, after a 
conformal transformation of $S^3$ and the induced transformation of $W$ we have $\lambda(0)=[e_3+ie_4],$ and $[e_0+e_3]$ lies on
$\lambda(\infty).$ Thus $\hat\lambda(z)=e_3+ie_4+zb(e_1\pm ie_2),$ where $b\in\C.$ 
The curves $\lambda$ and $\gamma$ coincide after applying the conformal transformation 
$\tilde z=1/bz$ on $S^2$ and the conformal transformation on $S^3$ given by the linear
isometry of $W$ with
$e_2\mapsto\pm e_2;\ e_i\mapsto e_i,\ i\neq2.$
\end{proof}


\end{document}